\DeclareFontFamily{U}{mathx}{\hyphenchar\font45}
\DeclareFontShape{U}{mathx}{m}{n}{
      <5> <6> <7> <8> <9> <10>
      <10.95> <12> <14.4> <17.28> <20.74> <24.88>
      mathx10
      }{}
\DeclareSymbolFont{mathx}{U}{mathx}{m}{n}
\DeclareMathAccent{\widecheck}{0}{mathx}{"71}
\DeclareMathAccent{\wideparen}{0}{mathx}{"75}
\newtheorem{theorem}{Theorem}[section]
\newtheorem{corollary}[theorem]{Corollary}
\theoremstyle{remark}
\newtheorem*{remark*}{Remark}
\theoremstyle{definition}
\newtheorem{definition}[theorem]{Definition}
\numberwithin{equation}{section}
\newcommand{\vertiii}[1]{{\left\vert\kern-0.25ex\left\vert\kern-0.25ex\left\vert #1 
    \right\vert\kern-0.25ex\right\vert\kern-0.25ex\right\vert}}
\newcounter{smallromans}
\newenvironment{romanenumerate}
{\begin{list}{{\normalfont\textrm{(\roman{smallromans})}}}%
  {\usecounter{smallromans}\setlength{\itemindent}{0cm}%
   \setlength{\leftmargin}{5.5ex}\setlength{\labelwidth}{5.5ex}%
   \setlength{\topsep}{.5ex}\setlength{\partopsep}{.5ex}%
   \setlength{\itemsep}{0.1ex}}}%
{\end{list}}
\newcounter{smallromansdash}
\newcounter{bigromans} 
  {\end{list}}
\begin{document}
\title{Vector-valued invariant means revisited once again}
\author[T.~Kania]{Tomasz Kania}

\address{Mathematics Institute,
University of Warwick,
Gibbet Hill Rd, 
Coventry, CV4 7AL, 
England}
\email{tomasz.marcin.kania@gmail.com, t.kania@warwick.ac.uk}

\subjclass[2010]{43A07 (primary), and 46B50 (secondary)} 
\keywords{amenable semigroup, invariant mean, vector-valued mean, principle of local reflexivity, Banach space complemented in bidual}

\thanks{The author acknowledges with thanks funding received from the European Research Council / ERC Grant Agreement No.~291497.}
\begin{abstract}Banach spaces that are complemented in the second dual are characterised precisely as those spaces $X$ which enjoy the property that for every amenable semigroup $S$ there exists an $X$-valued analogue of an invariant mean defined on the Banach space of all bounded $X$-valued functions on $S$. This was first observed by Bustos Domecq (\emph{J.~Math.~Anal.~Appl.}, 2002), however the original proof was slightly flawed as remarked by Lipecki. The primary aim of this note is to present a corrected version of the proof. We also demonstrate that universally separably injective spaces always admit invariant means with respect to countable amenable semigroups, thus such semigroups are not rich enough to capture complementation in the second dual as spaces falling into this class need not be complemented in the second dual.  \end{abstract}
\maketitle

\section{Introduction and the main result}

One of the most beautiful applications of the Markov--Kakutani fixed-point theorem is the existence of an invariant mean on every abelian semigroup $S$. To be more precise, denote by $\ell_\infty(S)$ the Banach space of all bounded, scalar-valued functions on $S$ furnished with the supremum norm. A \emph{left-invariant mean} (respectively, a \emph{right-invariant mean}) on $S$ is a norm-one linear functional $m$ on $\ell_\infty(S)$ such that $\langle m, \mathds{1}_S\rangle = 1$ and for each $s\in S$ and $f\in \ell_\infty(S)$ one has $\langle m, {}_sf \rangle = \langle m, f\rangle$ (respectively, $\langle m, f_s \rangle = \langle m, f\rangle$), where ${}_sf(t)=f(st)$ and $f_s(t)=f(ts)$ ($t\in S$). A norm-one functional on $\ell_\infty(S)$ is then called an \emph{invariant mean} if it is both a left- and right-invariant mean and a semigroup admitting an invariant mean is called \emph{amenable}; this notion was first distilled by Day in his seminal paper \cite{day}. We note in passing that the free group on two generators is a paradigm example of a~group without an invariant mean. Using then the just-introduced terminology we may rephrase the statement from the very first sentence: abelian semigroups are amenable. \smallskip

It is perhaps not too widely known that Pe{\l}czy\'{n}ski had employed vector-valued invariant means \emph{en route} to the proof of the theorem saying that if $X$ is a Banach space, $Y\subseteq X$ is a closed subspace, which is complemented in $Y^{**}$ and there exists a Lipschitz map $r\colon X\to Y$ such that $r(y)=y$ for $y\in Y$, then $Y$ is (linearly) complemented in $X$ (\cite[pp.~61--62]{pelczynski}, see also \cite[Theorem 3.3]{benyamini}). Quite clearly, the possibility of averaging vectors in an infinite-dimensional space is a desirable titbit and so has been considered, for instance, in the theory of functional equations (\cite{badora,bgp,gajda,ger}). \smallskip

Let us then introduce properly the notion that we shall be concerned with in this note.
\begin{definition}Let $X$ be a Banach space, $S$ a semigroup and $C\geqslant 1$. We define an $X$\emph{-valued invariant $C$-mean} on $S$ to be a bounded linear operator $M\colon \ell_\infty(S,X)\to X$ of norm at most $C$ such that for every $x\in X$, $s\in S$ and for all $f\in \ell_\infty(S,X)$ we have
\begin{romanenumerate}
\item $M(x\mathds{1}_S) = x$;
\item $M(f) = M({}_sf) =  M(f_s)$,\end{romanenumerate}
where $\ell_\infty(S,X)$ denotes the Banach space of all bounded $X$-valued functions on $S$, $x\mathds{1}_S$ stands for the function constantly equal to $x$ on $S$ and ${}_sf$ and $f_s$ are defined as before. An $X$\emph{-valued invariant mean on }$S$ is then an $X$-valued invariant $C$-mean for some $C\geqslant 1$.  \end{definition}
\noindent\emph{Note}: We use interchangeably two conventions for denoting elements of $\ell_\infty(S,X)$; sometimes we term them by lower-case letters, however later on it will be more convenient to treat them as indexed tuples $(x_s)_{s\in S}$. We trust that this will not lead to confusion.\smallskip

As observed by Pe{\l}czy\'{n}ski himself, if $S$ is an amenable semigroup and $X$ is complemented in $X^{**}$, then there is an $X$-valued invariant mean on $S$ (actually Pe{\l}czy\'{n}ski worked with abelian semigroups but the proof is verbatim the same under the presence of an invariant mean on a general semigroup). Indeed, let $m$ be a (scalar-valued) invariant mean on $S$. Given $f\in \ell_\infty(S, X)$, we define a map $\widetilde{M}\colon \ell_\infty(S, X)\to X^{**}$ by $$\langle \widetilde{M}f, \varphi\rangle = \big\langle m, \big(\langle \varphi, f(t)\rangle\big)_{t\in S} \big\rangle   \quad (\varphi \in X^*). $$
As $m$ is a norm-one functional, $\widetilde{M}$ is a norm-one bounded linear operator. Let $P$ be a~projection from $X^{**}$ onto the canonical copy of $X$. Then $$M = \kappa_X^{-1}P\widetilde{M}$$ is an $X$-valued invariant $\|P\|$-mean on $S$, where $\kappa_X\colon X\to X^{**}$ denotes the canonical embedding into the second dual.\smallskip

Bustos Domecq (\cite[Theorem 2]{felix}) made a statement asserting that the converse to this statement is also true, that is to say, if for every abelian semigroup $S$ there exists an $X$-valued invariant $C$-mean on $S$, then $X$ is complemented in $X^{**}$ by a projection of norm at most $C$. However, as observed by Lipecki in his  Mathematical
Review (MR1943762) of Bustos Domecq's paper, the proof of this theorem contains a gap, which we believed could not be easily fixed without weakening the conclusion of the theorem. Initially, we have verified this result giving a~different, perhaps unnecessarily intricate proof based on ultrapower techniques. Having communicated this to Bustos Domecq, we were offered a simpler fix with a permission to reproduce it here. The primary aim of this note is to offer a remedy to this problem by providing a correct proof. \smallskip

Since the result itself is of interest to people working in stability theory of functional equations, we believe that a revised proof ought to be available for the future reference.

\begin{theorem}\label{main}Let $X$ be a Banach space and $C\geqslant 1$. Then the following assertions are equivalent.
\begin{romanenumerate}
\item\label{complement} $X$ is complemented in $X^{**}$ by a projection of norm at most $C$;
\item\label{allamenable} for every amenable semigroup $S$ there exists an $X$-valued invariant $C$-mean on $S$;
\item\label{cardinality} for every commutative semigroup $S$ of cardinality $|X^{**}|$ there exists an $X$-valued invariant $C$-mean on $S$.
\end{romanenumerate}\end{theorem}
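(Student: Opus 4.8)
The plan is to prove the cycle of implications $\romanref{complement}\Rightarrow\romanref{allamenable}\Rightarrow\romanref{cardinality}\Rightarrow\romanref{complement}$. The first implication is already essentially done in the discussion preceding the theorem: given a scalar invariant mean $m$ on an amenable semigroup $S$ and a norm-at-most-$C$ projection $P\colon X^{**}\to\kappa_X(X)$, the operator $M=\kappa_X^{-1}P\widetilde M$ is an $X$-valued invariant $C$-mean, and its norm is at most $\|P\|\leqslant C$ because $\widetilde M$ has norm one. The implication $\romanref{allamenable}\Rightarrow\romanref{cardinality}$ is immediate, since every commutative semigroup is amenable (abelian semigroups are amenable, as recalled in the introduction), so $\romanref{cardinality}$ is just a special case of $\romanref{allamenable}$.

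The substance of the theorem is therefore the implication $\romanref{cardinality}\Rightarrow\romanref{complement}$, and this is where the gap in Bustos Domecq's argument lay. The idea is to manufacture a \emph{single} commutative semigroup $S$, of cardinality $|X^{**}|$, whose $X$-valued invariant mean can be decoded into a norm-at-most-$C$ projection of $X^{**}$ onto $X$. The natural candidate is to take $S$ to be the set $X^{**}$ itself (or a convenient index set of that cardinality, e.g.\ the closed unit ball of $X^{**}$ together with enough structure), equipped with a commutative semigroup operation — the simplest choice being the \emph{left-zero} or rather a null/constant-type multiplication, or addition of the underlying vector space; one must pick an operation that is genuinely associative and commutative while being rich enough that invariance forces the mean to "see" every element of $X^{**}$. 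Given an $X$-valued invariant $C$-mean $M$ on this $S$, one defines, for $\Phi\in X^{**}$, a bounded net (or function) $f_\Phi\in\ell_\infty(S,X)$ built from elements of $X$ that converge weak$^*$ to $\Phi$ — here the \textbf{principle of local reflexivity} enters, supplying, for each finite-dimensional subspace and each $\varepsilon>0$, elements of $X$ of norm close to $\|\Phi\|$ that mimic $\Phi$ on finitely many functionals — and sets $P^{**}\Phi$ to be (the canonical image of) $M(f_\Phi)$, or a weak$^*$-limit thereof. One checks that this is well defined independently of the approximating function (invariance of $M$ under the translations ${}_sf$, $f_s$ is exactly what kills the ambiguity), linear, of norm at most $C$, and restricts to the identity on $\kappa_X(X)$ by property (i) of the mean ($M(x\mathds 1_S)=x$).

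The main obstacle, and the precise point where care is needed, is establishing that the assignment $\Phi\mapsto M(f_\Phi)$ is \emph{well defined and linear}: two different local-reflexivity approximations of the same $\Phi$, or approximations of $\Phi+\Psi$ versus the "sum" of approximations of $\Phi$ and $\Psi$, differ by functions in $\ell_\infty(S,X)$ that are not literally zero but are "translation-negligible", and one must exploit the invariance identities $M(f)=M({}_sf)=M(f_s)$ — together with the chosen semigroup structure on $S$ — to show $M$ annihilates such differences. This is the spot where the original proof was flawed, and the fix (due to Bustos Domecq, reproduced here) presumably consists in choosing the semigroup operation on $S=X^{**}$ and the approximating functions $f_\Phi$ with enough compatibility that the cancellation is forced. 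The cardinality bookkeeping — that one only ever needs a semigroup of size $|X^{**}|$ rather than something larger — is then a routine count: the index set for all the finite configurations arising from local reflexivity applied to $X^{**}$ has cardinality $|X^{**}|$, so no larger semigroup is required, which is exactly the content of the refinement $\romanref{cardinality}$ over $\romanref{allamenable}$.
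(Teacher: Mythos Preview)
Your outline of $\romanref{complement}\Rightarrow\romanref{allamenable}\Rightarrow\romanref{cardinality}$ is fine and matches the paper. The gap is in $\romanref{cardinality}\Rightarrow\romanref{complement}$: you correctly isolate that the whole proof hinges on choosing the semigroup $S$ and the functions $f_\Phi$ so that invariance forces linearity, but you never actually make that choice, and the candidates you float do not work. Taking $S=X^{**}$ with a left-zero product gives ${}_sf$ constant, whence $M(f)=M({}_sf)=f(s)$ for every $s$, which is useless; taking vector addition on $X^{**}$ gives translations that have nothing to do with passing to better local-reflexivity approximants, so there is no mechanism by which $M(f_{\Phi+\Psi})-M(f_\Phi)-M(f_\Psi)$ would be forced to vanish. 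In other words, your proposal stops precisely at the point where Bustos Domecq's original argument broke down, and the phrase ``presumably consists in'' is doing all the work.

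The paper's fix is a specific, and rather clean, choice: set
\[
S=\{(F,\varepsilon): F\subset X^{**}\text{ finite-dimensional},\ \varepsilon\in(0,1]\},\qquad (F_1,\varepsilon_1)+(F_2,\varepsilon_2)=(F_1+F_2,\min\{\varepsilon_1,\varepsilon_2\}),
\]
a commutative idempotent monoid of cardinality $|X^{**}|$. For each $(F,\varepsilon)$ fix a local-reflexivity operator $P_F^\varepsilon$ as in Theorem~\ref{LR}, and for $x\in X^{**}$ define $\Phi\big((F,\varepsilon),x\big)=P_F^\varepsilon x$ if $x\in F$ and $0$ otherwise; then $Px=\kappa_X M\big(\Phi(\cdot,x)\big)$. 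The point of this semigroup is that translation by any $(F_0,\varepsilon_0)$ with $x,y\in F_0$ replaces $\Phi(\cdot,x)$, $\Phi(\cdot,y)$, $\Phi(\cdot,x+y)$ by functions that are \emph{genuinely linear in $x$} at every coordinate (namely $(F,\varepsilon)\mapsto P_{F+F_0}^{\min\{\varepsilon,\varepsilon_0\}}x$), so invariance of $M$ yields additivity of $P$ for free; the norm bound and $P|_{\kappa_X(X)}=\mathrm{id}$ follow from $\|P_F^\varepsilon\|\leqslant 2$, a passage to the limit in $\varepsilon$, and clause (ii) of Theorem~\ref{LR}. This concrete choice of $S$ is the missing idea in your proposal.
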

We have already sketched the implication \eqref{complement} $\Rightarrow$ \eqref{allamenable}, which is also the content of \cite[Theorem~1]{felix}. The implication \eqref{allamenable} $\Rightarrow$ \eqref{cardinality} is immediate. Then it will be enough to demonstrate the implication \eqref{cardinality} $\Rightarrow$ \eqref{complement}. We postpone the proof to the final section.  \smallskip

We also obtain a result which allows averaging with respect to amenable semigroups that are in a sense not too large.  In order to state the result, we require a piece of terminology.\smallskip

For a Banach space $X$ we denote by $d(X)$ the \emph{density} of $X$, that is, the minimal cardinality of a dense set in $X$. In particular, separable Banach spaces have by definition density  $\aleph_0$.

\begin{theorem}\label{dy}Let $X$ be a Banach space, $C\geqslant 1$ and let $\lambda\leqslant d(X)$ be a~cardinal number. Suppose that for every subspace $Y$ of $X$ with $d(Y) \leqslant \lambda$ there exists a complemented subspace $Z_Y\subseteq X$ containing $Y$, which is also complemented in $Z^{**}$ by a~projection of norm at most $C$. Suppose moreover that the assignment $Y\mapsto Z_Y$ is inclusion-preserving. Then for every amenable semigroup $S$ with cardinality at most $\lambda$, there exists an $X$-valued invariant $C$-mean on $S$. \end{theorem}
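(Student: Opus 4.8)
The plan is to mimic, \emph{locally}, the construction underlying the implication \eqref{complement} $\Rightarrow$ \eqref{allamenable}, and then to splice the local pieces together along the family $\mathcal Y$ of all closed subspaces $Y\subseteq X$ with $d(Y)\leqslant\lambda$; this family is directed by inclusion and closed under finite closed sums, since $\lambda$ is infinite (we may assume this, the complementary case being degenerate: then $S$ is finite, $\ell_\infty(S,X)$ is finite-dimensional over $X$, and the assertion is straightforward). Fix a scalar invariant mean $m$ on $S$ and let $\widetilde M\colon\ell_\infty(S,X)\to X^{**}$ be the attendant norm-one operator, $\langle\widetilde Mf,\varphi\rangle=\langle m,(\langle\varphi,f(t)\rangle)_{t\in S}\rangle$, exactly as in the easy implication. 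For $f\in\ell_\infty(S,X)$ set $Y_f=\overline{\operatorname{span}}\{f(t):t\in S\}$; then $Y_f\in\mathcal Y$ because $|S|\leqslant\lambda$, and a direct computation shows $\widetilde Mf\in Y_f^{**}\subseteq Z_{Y_f}^{**}$, where $Y^{**}$ is identified with the weak-$*$ closed subspace $(Y^{\perp})^{\perp}$ of $X^{**}$ in the usual way (these identifications being mutually consistent for nested closed subspaces). Since $Z_Y$ is complemented in $Z_Y^{**}$ by a projection of norm at most $C$, fix for each $Y\in\mathcal Y$ such a projection $P_Y\colon Z_Y^{**}\to\kappa_{Z_Y}(Z_Y)$; then $\kappa_{Z_Y}^{-1}P_Y\widetilde M$ is a $Z_Y$-valued invariant $C$-mean on $S$ (this is the easy implication applied to $Z_Y$), and as $f$ runs over $\ell_\infty(S,X)=\bigcup_{Y\in\mathcal Y}\ell_\infty(S,Z_Y)$ these are the local pieces to be spliced.

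The splicing reduces to producing one linear operator $R$ on the subspace $D:=\bigcup_{Y\in\mathcal Y}Z_Y^{**}$ of $X^{**}$ (which is a subspace because $\{Z_Y^{**}\}_{Y\in\mathcal Y}$ is upward directed) with $\|R\|\leqslant C$ and $R(\kappa_Xw)=w$ for every $w$ in the dense subspace $W:=\bigcup_{Y\in\mathcal Y}Z_Y$: granting such an $R$, the operator $M:=R\circ\widetilde M$ is an $X$-valued invariant $C$-mean on $S$, since $\widetilde Mf\in Z_{Y_f}^{**}\subseteq D$, $\widetilde M(x\mathds{1}_S)=\kappa_Xx\in\kappa_X(W)$, and already $\widetilde M(f)=\widetilde M({}_sf)=\widetilde M(f_s)$. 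And $R$ exists as soon as the projections $P_Y$ have been chosen \emph{coherently}, that is, so that $P_{Y'}$ restricts to $P_Y$ on $Z_Y^{**}$ whenever $Y\subseteq Y'$ in $\mathcal Y$ — equivalently, so that there is a single closed subspace $N\subseteq X^{**}$ meeting each $Z_Y^{**}$ in a norm-$\leqslant C$ complement of $\kappa_{Z_Y}(Z_Y)$. Indeed, $R$ is then unambiguously defined on $Z_Y^{**}$ to be $\kappa_{Z_Y}^{-1}P_Y$; linearity and the bound $\|R\|\leqslant C$ follow from the directedness of $\mathcal Y$, and $R(\kappa_Xw)=w$ for $w\in W$ because each $P_Y$ fixes $\kappa_{Z_Y}(Z_Y)$.

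Producing the coherent family is the substance of the argument, and it is where the hypothesis is exploited in full. Inclusion-preservation of $Y\mapsto Z_Y$ is what makes the biduals $Z_Y^{**}$ nest compatibly inside $X^{**}$, so that ``restriction'' is even meaningful; and it is the \emph{uniformity} of the constant $C$ in ``$Z_Y$ is complemented in $Z_Y^{**}$'' that prevents the norm of $R$ from deteriorating as one passes up the family. I would construct $N$ by a transfinite exhaustion of $\mathcal Y$, the delicate point being the successor step, where a coherent system already specified on the biduals of the previously treated subspaces must be enlarged to cover one further $Z_Y^{**}$ while the norm of the associated correction is kept at most $C$; here one leans on $Z_Y$ being complemented in $X$ — so that $Z_Y^{**}$ contains the biduals handled so far as complemented subspaces — together with its $C$-complementation in its own bidual. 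Once $N$, hence $R$, is in hand, the remaining assertions — that $M$ has norm at most $C$, that $M(x\mathds{1}_S)=x$, and that $M$ is left- and right-invariant — are immediate from the properties of $R$ and $\widetilde M$ recorded above; the only genuine difficulty is the coherent choice of the bidual projections, everything else being bookkeeping with the canonical embeddings $Y^{**}\hookrightarrow X^{**}$.
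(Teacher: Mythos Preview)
Your overall architecture matches the paper's --- fix a scalar mean $m$, form $\widetilde M$, observe $\widetilde Mf\in Z_{Y_f}^{**}$, and push down via the $P_Y$ --- but there is a genuine gap at the point you yourself flag as ``the substance of the argument'': the coherent choice of the bidual projections. You propose to build the subspace $N$ by a transfinite exhaustion of $\mathcal Y$, yet this is not carried out, and two concrete obstacles stand in the way. First, $\mathcal Y$ is only directed by inclusion, not totally ordered, so under any well-ordering the successor step cannot assume that ``$Z_Y^{**}$ contains the biduals handled so far'' as you write; the previously treated $Z_{Y_\beta}$ need not sit inside the current $Z_{Y_\alpha}$ at all. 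Second, and more seriously, even along a chain you have no extension mechanism: the hypothesis supplies \emph{some} norm-$\leqslant C$ projection from $Z_{Y'}^{**}$ onto $\kappa_{Z_{Y'}}(Z_{Y'})$, but nothing lets you choose one whose restriction to $Z_Y^{**}$ coincides with a previously fixed $P_Y$. That $Z_Y$ is complemented in $X$ embeds $Z_Y^{**}$ complementedly in $Z_{Y'}^{**}$, but it does not let you adjust the kernel of a given projection while keeping the norm bound $C$. As written, then, the proof is a reduction to a statement (existence of a coherent family $\{P_Y\}$) that is at least as hard as the theorem and is left unproved.

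The paper sidesteps this entirely. It fixes \emph{arbitrary} projections $P_Y$ of norm $\leqslant C$, makes no coherence demand on them, and instead argues directly that the resulting means $M_Y=\kappa_{Z_Y}^{-1}P_Y\widetilde M_Y$ already agree on overlaps: $M_{Y_1}(x_s)_{s\in S}=M_{Y_2}(x_s)_{s\in S}$ whenever $Y_1\subseteq Y_2$ and $(x_s)\in\ell_\infty(S,Y_1)$. The verification uses the complementing projections $Q_Y\colon X\to Z_Y$ to identify $Z_{Y_1}^*$ inside $Z_{Y_2}^*$ and to see that $\widetilde M_{Y_2}(x_s)_{s\in S}$ restricted to $Z_{Y_1}^*$ equals $\widetilde M_{Y_1}(x_s)_{s\in S}$; compatibility is established at the level of the $M_Y$ rather than the $P_Y$. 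With that in hand one sets $M(x_s)_{s\in S}:=M_{[x_s:\,s\in S]}(x_s)_{s\in S}$ and checks additivity by passing up to $\overline{[x_s:s\in S]+[y_s:s\in S]}$ and invoking the compatibility twice. In short, the paper works one level up --- with the means themselves --- precisely to avoid the coherence problem you set yourself at the level of the projections.
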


Let us note that there are numerous natural examples of spaces verifying the hypotheses of this theorem. For example, universally separably injective Banach spaces fall into this class because they are precisely those Banach spaces whose separable subspaces are contained in subspaces isomorphic to $\ell_\infty$ (\cite[Theorem 5.2]{spanyards}) (actually it is important that the Banach--Mazur distances of these subspaces to $\ell_\infty$ are uniformly bounded; a close inspection of the proof reveals that this is possible and that the assignment of a complemented superspace may be arranged to be inclusion preserving). A notable example of such a space is the space $\ell_\infty^c(\Gamma)$ of all bounded, countably supported functions on an uncountable set $\Gamma$ endowed with the supremum norm. That $\ell_\infty^c(\Gamma)$ is not complemented in its second dual was probably first observed by Pe{\l}czy\'{n}ski and Sudakov (\cite{pelsud}). However, it is readily seen that every separable subspace $Y$ of $\ell_\infty^c(\Gamma)$ is contained in $$Z_Y=\{f\in \ell_\infty^c(\Gamma)\colon \text{if }f(\gamma)\neq 0\; (\gamma\in \Gamma),\text{ then } g(\gamma)\neq 0\text{ for some } g\in Y\},$$ a subspace isometrically isomorphic to $\ell_\infty$, which, by the fact that $\ell_\infty \cong \ell_1^*$ isometrically, is then complemented in its second dual by a projection of norm 1. Clearly, if $Y_1\subseteq Y_2$ are separable subspaces of $\ell_\infty^c(\Gamma)$, then $Z_{Y_1}\subseteq Z_{Y_2}$. In particular, Theorem~\ref{dy} applies to spaces that are not necessarily complemented in the second dual themselves. We have thus obtained the following corollary.
\begin{corollary}Let $X$ be a universally separably injective Banach space. Then for every countable, amenable semigroup $S$, there exists an $X$-valued invariant mean on $S$.\end{corollary}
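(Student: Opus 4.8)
The plan is to derive the corollary as a direct application of Theorem~\ref{dy}, taken with the cardinal $\lambda = \aleph_0$, so that the phrase ``cardinality at most $\lambda$'' there becomes ``countable''. If $X$ is finite-dimensional it is reflexive, hence complemented in $X^{**}$, and Theorem~\ref{main} already yields an $X$-valued invariant mean on every amenable semigroup; so one may assume $X$ is infinite-dimensional, which guarantees $d(X) \geqslant \aleph_0 = \lambda$, as required by the hypothesis of Theorem~\ref{dy}. It then remains to verify that hypothesis: for every separable subspace $Y \subseteq X$ one must produce a complemented subspace $Z_Y \subseteq X$ with $Y \subseteq Z_Y$, complemented in $Z_Y^{**}$ by a projection of norm at most some constant $C \geqslant 1$ independent of $Y$, and such that $Y \mapsto Z_Y$ is inclusion-preserving.

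To this end I would invoke the structural description of universally separably injective spaces recorded in \cite[Theorem~5.2]{spanyards}: each separable subspace of $X$ sits inside a subspace of $X$ isomorphic to $\ell_\infty$. As indicated in the discussion preceding the corollary, a close inspection of that proof shows one can in fact choose the superspaces $Z_Y$ so that $d_{BM}(Z_Y,\ell_\infty) \leqslant \lambda_0$ for a single constant $\lambda_0$ depending only on $X$, and so that $Y_1 \subseteq Y_2$ forces $Z_{Y_1} \subseteq Z_{Y_2}$. Granting this, $Z_Y$ is complemented in $X$ because $\ell_\infty$ is injective and injectivity is an isomorphic invariant; and since $\ell_\infty \cong \ell_1^*$ isometrically, $\ell_\infty$ is a dual space and is therefore $1$-complemented in its bidual, via the adjoint of the canonical embedding $\kappa_{\ell_1}$. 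Transporting the latter projection along any isomorphism $T \colon Z_Y \to \ell_\infty$ produces a projection of $Z_Y^{**}$ onto $Z_Y$ of norm at most $\|T\|\,\|T^{-1}\|$, which may be taken $\leqslant \lambda_0$. Hence all the requirements of Theorem~\ref{dy} hold with $C = \lambda_0$, and that theorem delivers an $X$-valued invariant $\lambda_0$-mean --- in particular an $X$-valued invariant mean --- on every countable amenable semigroup $S$.

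The one point needing genuine care is the claim, imported from \cite{spanyards}, that the assignment $Y \mapsto Z_Y$ can be made simultaneously uniformly bounded (in Banach--Mazur distance to $\ell_\infty$) and inclusion-preserving; this is where I expect the bulk of the work to lie, and it would be carried out by following the construction of the $\ell_\infty$-superspaces in the proof of \cite[Theorem~5.2]{spanyards} and, should monotonicity fail to be automatic from that construction, enforcing it by indexing over an increasing chain of separable subspaces and passing to closed unions. The remaining ingredients --- injectivity of $\ell_\infty$, its $1$-complementation in the bidual, and the behaviour of projections under isomorphisms --- are entirely standard bookkeeping with adjoints and isomorphism constants.
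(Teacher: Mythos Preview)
Your proposal is correct and follows precisely the route the paper takes: the corollary is obtained by applying Theorem~\ref{dy} with $\lambda=\aleph_0$, after invoking \cite[Theorem~5.2]{spanyards} together with the observation (flagged in the paper just as you flag it) that the $\ell_\infty$-superspaces can be chosen with uniformly bounded Banach--Mazur distance and in an inclusion-preserving way. Your write-up is in fact more explicit than the paper's, which simply states the corollary after the relevant paragraph; your handling of the finite-dimensional case and the transport of the Dixmier projection along an isomorphism are useful details the paper leaves implicit.
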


We do not know whether countable, amenable semigroups are sufficient for capturing complementation of separable Banach spaces in the second dual. It would also be interesting to know whether one may replace amenable semigroups by amenable groups in the statement of clause (ii) of Theorem~\ref{main}.\smallskip

\noindent \emph{Acknowledgement}. We are indebted to Rados{\l}aw {\L}ukasik (Katowice) for having brought to our attention the problem of characterising Banach spaces which may be targets of invariant means on amenable semigroups  and for pointing out Lipecki's Mathematical Review of Bustos Domecq's paper (\cite{felix}), where a gap in the proof of \cite[Theorem~2]{felix} was detected. We are also grateful to Wojciech Bielas (Prague) for spotting certain slips in the previous version of this note. Finally, we wish to thank Bustos Domecq for insightful e-mail exchanges and a permission to include his correction to the proof of \cite[Theorem~2]{felix}.

\section{Preliminaries and general remarks}
For a Banach space $X$ we denote by $\kappa_X\colon X\to X^{**}$ the canonical embedding in the second dual that is given by $\langle \kappa_Xx, f\rangle = \langle f, x\rangle$ ($x\in X, f\in X^*$). We say that $X$ \emph{is complemented in} $X^{**}$ if there exists a~bounded linear projection from $X^{**}$ onto $\kappa_X(X)$. Lindenstrauss had observed (\cite{lind}) that $X$ is complemented in $X^{**}$ if and only if $X$ is isomorphic to a complemented subspace of a~dual Banach space. A notable example of a Banach space which is not complemented in its second dual is the space $c_0$ and by Sobczyk's theorem, neither is any other separable Banach space which contains an isomorphic copy of it. On the other hand, if $X$ is a dual space itself, then it is complemented in $X^{**}$ via the so-called \emph{Dixmier projection} being the adjoint of the embedding $\kappa_Y\colon Y\to X^*$. The Dixmier projection has then norm one but this need not be the case for a general non-reflexive space complemented in its second dual. Indeed, van Dulst and Singer (\cite[Theorem~2.1]{vDS}) proved that every non-reflexive space can be renormed in such a way that there is no norm-one projection from the second dual of the renormed space onto the image of the canonical embedding. Finet and Schachermayer improved their result in the class of non-reflexive spaces with separable dual by showing that in such case there exist a renorming and a constant $\delta>1$ such that every projection from the second dual (if there is one) has norm at least $\delta$ (\cite[Corollary~10]{FS}). In particular, one cannot hope that Theorem~\ref{main} may improved further so that the constant $C$ appearing in the statement is equal to 1 under the mere hypothesis of $X$ being complemented in $X^{**}$.\smallskip

Let us state the version of the principle of local reflexivity due to Lindenstrauss and Rosenthal (\cite{lindros}) that we shall require.

\begin{theorem}\label{LR}Let $X$ be a Banach space. Then for every finite-dimensional subspace $F\subset X^{**}$ and each $\varepsilon \in (0,1]$ there exists a linear map $P_F^\varepsilon \colon F\to \kappa_X(X)$ such that
\begin{romanenumerate}
\item $(1-\varepsilon)\|x\|\leqslant \|P_F^\varepsilon x\| \leqslant (1+\varepsilon)\|x\| \quad (x\in F)$;
\item $P_F^\varepsilon x = x$ for $x\in F\cap \kappa_X(X)$.
\end{romanenumerate}
\end{theorem}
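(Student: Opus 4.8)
The statement is the Lindenstrauss--Rosenthal principle of local reflexivity. The plan is to prove it by combining Goldstine's theorem with the classical perturbation argument (equivalently, a Helly-type selection). Identify $X$ with $\kappa_X(X)\subseteq X^{**}$ and put $E:=F\cap\kappa_X(X)$; fix a basis $v_1,\dots,v_k$ of $E$, extend it to a basis $v_1,\dots,v_n$ of $F$, write $v_i = \kappa_X y_i$ for $i\leqslant k$, and fix a small $\delta\in(0,\varepsilon)$ to be adjusted at the very end. Since $F$ is finite-dimensional and $B_{X^*}$ norms $X^{**}$ for the weak$^*$ topology, pick a finite set $\Phi\subseteq B_{X^*}$ with $\max_{\varphi\in\Phi}|\langle\varphi,x\rangle|\geqslant(1-\delta)\|x\|$ for every $x\in F$. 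It then suffices to produce $x_1,\dots,x_n\in X$ with: (a) $x_i=y_i$ for $i\leqslant k$; (b) $\langle\varphi,x_i\rangle=\langle\varphi,v_i\rangle$ for all $\varphi\in\Phi$ and all $i\leqslant n$; and (c) $\big\|\sum_ia_ix_i\big\|\leqslant(1+\delta)\big\|\sum_ia_iv_i\big\|$ for all scalars $a_1,\dots,a_n$. Indeed, the linear map $T\colon v_i\mapsto x_i$ then satisfies $\|Tx\|\leqslant(1+\delta)\|x\|$ by~(c), $\|Tx\|\geqslant\max_{\varphi\in\Phi}|\langle\varphi,Tx\rangle|=\max_{\varphi\in\Phi}|\langle\varphi,x\rangle|\geqslant(1-\delta)\|x\|$ by~(b), and $Tx=x$ on $E$ by~(a), so $P_F^\varepsilon:=\kappa_X\circ T$ is as required once $\delta$ is chosen small in terms of $\varepsilon$.

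The soft part is that condition~(b) by itself is cheap: by Goldstine's theorem each $v_i$ is a weak$^*$-limit of a net in $\|v_i\|B_X$, so there is $x_i^{0}\in X$ with $\|x_i^{0}\|\leqslant\|v_i\|$ and $|\langle\varphi,x_i^{0}-v_i\rangle|$ as small as we please for $\varphi\in\Phi$; a small further correction in $X$ makes the finitely many values $\langle\varphi,x_i\rangle$ exactly right, and for $i\leqslant k$ one simply sets $x_i=y_i$, which already satisfies~(b). Tested against $\Phi$, this secures the lower estimate used above. The genuine issue is meeting~(c) \emph{at the same time}: weak$^*$ approximation against a finite functional set controls the individual $x_i$ but says nothing about the norms of their linear combinations, and gaining that control is precisely the content of the principle of local reflexivity.

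For the crux I would follow the classical route. First reduce the infinitely many inequalities in~(c)---one for each direction $(a_i)$ on the unit sphere of $F$---to finitely many by compactness, at the cost of replacing $\delta$ by, say, $2\delta$. One is then left with a finite system: the affine constraints coming from~(a) and~(b), together with finitely many norm constraints from~(c), for which a common solution in $X^{n}$ is sought. The existence of such a solution is exactly what the Lindenstrauss--Rosenthal perturbation argument---or Dean's proof via a Helly-type intersection theorem---provides; its driving input is that the tuple $(v_1,\dots,v_n)\in(X^{**})^{n}$ already solves the $X^{**}$-version of the entire system with the factor $(1+\delta)$ to spare, and that this solution lies in the weak$^*$ closure of $X^{n}$, so that all finite sub-systems over $X$ are consistent. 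I expect this conversion---from ``solvable over $X^{**}$ with slack'' to ``solvable over $X$ while keeping the bound $(1+\delta)$ and the exact identity on $F\cap\kappa_X(X)$''---to be the main obstacle; the rest (the reduction, the appeal to Goldstine, the bookkeeping deducing~(i) and~(ii) from~(a)--(c)) is routine, so it is on this single step that the real work falls.
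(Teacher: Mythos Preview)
The paper does not prove this theorem at all: it is merely \emph{stated} as the Lindenstrauss--Rosenthal principle of local reflexivity, with a citation to \cite{lindros}, and then used as a black box in the proof of Theorem~\ref{main}. So there is nothing to compare your argument to on the paper's side.

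Your outline is the standard route to local reflexivity and is correct as a plan: reduce the two-sided norm control to a finite system via a norming set $\Phi$ and compactness of the sphere of $F$, and then solve the resulting finite system of affine equalities and norm inequalities in $X^n$ using a Helly-type selection, the key input being that the tuple $(v_1,\dots,v_n)$ solves the corresponding system in $(X^{**})^n$ with slack. Your reduction of (i)--(ii) to (a)--(c) is clean. The one place where your write-up is a sketch rather than a proof is exactly the step you flag yourself: passing from ``solvable over $X^{**}$ with slack'' to ``solvable over $X$ with the same bound and with the exact identity on $F\cap\kappa_X(X)$''. That step genuinely requires work (Helly's theorem in $X$, or the Lindenstrauss--Rosenthal perturbation lemma, or Dean's argument via the principle of small perturbations); you name the right tools but do not carry them out. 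For the purposes of this paper that is perfectly adequate, since the theorem is being quoted rather than proved, but if you intend this as a self-contained proof you should either supply that argument in full or cite one of the standard sources explicitly.
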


\section{Proofs of Theorems~\ref{main} and \ref{dy}}
We are now ready to prove the main result of this note.
\begin{proof}[Proof of Theorem~\ref{main}]Let $X$ be a Banach space that satisfies the hypotheses of Theorem~\ref{main}. Without loss of generality we may suppose that $X$ is infinite-dimensional. Denote by $\mathfrak{F}$ the family of all finite-dimensional subspaces of $X^{**}$; it is then clear that $\mathfrak{F}$ is upwards-directed by inclusion. Since every finite-dimensional subspace of $X^{**}$ is determined by a finite subset of $X^{**}$, we have $|\mathfrak{F}|=|X^{**}|$. Let $$S = \{(F, \varepsilon)\colon F\in \mathfrak{F}, \varepsilon \in (0,1]\}.$$
Then $S$ carries the structure of a commutative (idempotent) semigroup with the neutral element $(\{0\}, 1)$ when furnished with the operation $$(F_1, \varepsilon_1) + (F_2, \varepsilon_2) = \big(F_1+F_2, \min\{\varepsilon_1, \varepsilon_2\}\big)\quad \big((F_i, \varepsilon_i)\in S, i=1, 2\big).$$
Certainly, $|S|=|X^{**}|$. Let $M\colon  \ell_\infty(S,X)\to X$ be an invariant $C$-mean on $S$. For each $(F,\varepsilon)\in S$ let $P_F^\varepsilon$ be a fixed linear operator satisfying clauses (i) and (ii) of Theorem \ref{LR}. Let $\Phi\colon S\times X^{**}\to X$ be the function defined by
$$ \Phi\big((F,\varepsilon), x \big) = \left\{\begin{array}{ll} P_F^\varepsilon x, & x\in F \\ 0, & \text{otherwise}.\end{array} \right.\qquad \big((F,\varepsilon)\in S, x\in X^{**}\big).$$
Then $\Phi(\cdot, x)\in \ell_\infty(S,X)$ for each $x\in X^{**}$ as it follows from clause (ii) of Theorem~\ref{LR} that $\|P_F^\varepsilon x\| \leqslant 2\|x\|$. We then set $$Px =\kappa_X M \big( \Phi(\cdot, x )\big) \qquad (x\in X^{**}),$$
so that $P\colon X^{**}\to \kappa_X(X)$. That $P$ is a linear projection and $\|P\|\leqslant C$ may be demonstrated exactly as in the proof of \cite[Theorem 2]{felix}.\end{proof}

\begin{proof}[Proof of Theorem~\ref{dy}]Let $C\geqslant 1$, $\lambda\leqslant d(X)$ and suppose that $X$ is a Banach space whose every subspace $Y$ with $d(Y)\leqslant \lambda$ is contained in a further subspace $Z_Y\subseteq X$ such that there exist bounded projections $P_Y\colon Z_Y^{**}\to \kappa_{Z_Y}(Z_Y)$ and $Q_Y\colon X\to Z_Y$ with $\|P_Y\|\leqslant C$. Let us consider the family $$\mathscr{Y}=\{Y\subseteq X\colon Y \text{ is a subspace with }d(Y)\leqslant \lambda\}.$$
For each $Y\in \mathscr{Y}$ we may choose, by the hypothesis, a bounded projection from $X$ onto a~subspace $Z_Y$ which contains $Y$ and is complemented in $Z_Y^{**}$ be a projection $P_Y$ of norm at most $C$ in such a way that the map $Y\mapsto Z_Y$ is inclusion-preserving. \smallskip

Let $S$ be an amenable semigroup with cardinality at most $\lambda$ and let $m$ be a scalar-valued invariant mean on $S$. Arguing as in \cite[Theorem~1]{felix}, we infer that for each $Y\in \mathscr{Y}$ the formula $$M_Y=\kappa_{Z_Y}^{-1}P_Y\widetilde{M}_Y$$ defines a $Z_Y$-valued invariant $C$-mean on $S$, where $\widetilde{M}_Y\colon \ell_\infty(S, Z_Y)\to Z_Y^{**}$ is defined by
$$\langle \widetilde{M}_Y(x_s)_{s\in S}, \varphi\rangle = \big\langle m, \big(\langle \varphi, x_t\rangle\big)_{t\in S} \big\rangle   \quad \big(\varphi \in {(Q_Y^*)(X^*)}\big). $$

Let $Y_1, Y_2\in \mathscr{Y}$ be subspaces with $Y_1\subseteq Y_2$. Then $Z_{Y_1}\subseteq Z_{Y_2}$. We \emph{claim} that the means produced using the above procedure are compatible in the sense that for $(x_s)_{s\in S}\in \ell_\infty(S,Y_1)$ we have \begin{equation}\label{piesel}M_{Y_1}(x_s)_{s\in S} = M_{Y_2}(x_s)_{s\in S}.\end{equation}
Indeed, pick $(x_s)_{s\in S}\in \ell_\infty(S,Y_1)$. We have $Z_{Y_1}\subseteq Z_{Y_2}$ and $Z_{Y_1}$ is complemented in $Z_{Y_2}$ by $Q_{Y_1}|_{Z_{Y_2}}$. Consequently, $Z_{Y_1}^*$ may be canonically identified with the image of $(Q_{Y_1}|_{Z_{Y_2}})^*$, that is contained in the image of $Q_{Y_2}^*$. Then
\begin{equation}\label{restriction}M_{Y_2}(x_s)_{s\in S} = M_{Y_2}(Q_{Y_1}x_s)_{s\in S} =  \kappa_{Z_{Y_2}}^{-1}P_{Y_2}\widetilde{M}_{Y_2}(Q_{Y_1}x_s)_{s\in S}.\end{equation}
Let $\varphi\in (Q_{Y_2}^*)(X^*)$. Then $\varphi = (Q_{Y_1}|_{Z_{Y_2}})^*\varphi + (I_{Z_{Y_2}} - Q_{Y_1}|_{Z_{Y_2}})^*\varphi$. Consequently,
$$\begin{array}{lcl}\big\langle m, \big(\langle \varphi, Q_{Y_1}x_t\rangle\big)_{t\in S} \big\rangle &= &\big\langle m, \big(\langle (Q_{Y_1}|_{Z_{Y_2}})^*\varphi + (I_{Z_{Y_2}} - Q_{Y_1}|_{Z_{Y_2}})^*\varphi, Q_{Y_1}x_t\rangle\big)_{t\in S} \big\rangle \\
& = &\big\langle m, \big(\langle (Q_{Y_1}|_{Z_{Y_2}})^*\varphi, Q_{Y_1}x_t\rangle\big)_{t\in S} \big\rangle. \end{array}$$
This means that $\big(\widetilde{M}_{Y_2}(x_s)_{s\in S}\big)|_{Z_{Y_1}^*} = \widetilde{M}_{Y_1}(x_s)_{s\in S}$. We may then continue computations from \eqref{restriction} to arrive at
$$\kappa_{Z_{Y_2}}^{-1}P_{Y_2}\widetilde{M}_{Y_2}(Q_{Y_1}x_s)_{s\in S}=\kappa_{Z_{Y_2}}^{-1}P_{Y_2}\widetilde{M}_{Y_1}(x_s)_{s\in S}=\kappa_{Z_{Y_2}}^{-1}P_{Y_1}\widetilde{M}_{Y_1}(x_s)_{s\in S}=M_{Y_1}(x_s)_{s\in S},$$
which completes the proof of the claim.\smallskip

We are now in a position to define a map $M\colon \ell_\infty(S,X)\to X$ by 
$$M(x_s)_{s\in S} = M_Y (x_s)_{s\in S}\quad \big((x_s)_{s\in S}\in \ell_\infty(S,X)\big), $$
where $Y=\overline{{\rm span}}\{x_s\colon s\in S\}$. By \eqref{piesel}, this map is well-defined. Indeed, it remains to notice that $M$ is additive because then all other properties of an $X$-valued invariant $C$-mean on $S$ will follow in similar fashion. For brevity, let us denote by $[x_s\colon s\in S]$ the closed linear span of $\{x_s\colon s\in S\}\subseteq X$. Take $(x_s)_{s\in S}, (y_s)_{s\in S}\in \ell_\infty(S,X)$. We have
$$\begin{array}{lcl}M(x_s+y_s)_{s\in S} & =& M_{[x_s+y_s\colon s\in S]}(x_s+y_s)_{s\in S}\\
&\stackrel{\eqref{piesel}}{=}& M_{\overline{[x_s\colon s\in S]+[y_s\colon s\in S]}}(x_s+y_s)_{s\in S}\\
&=&M_{\overline{[x_s\colon s\in S]+[y_s\colon s\in S]}}(x_s)_{s\in S}+M_{\overline{[x_s\colon s\in S]+[y_s\colon s\in S]}}(y_s)_{s\in S}\\
&\stackrel{\eqref{piesel}}{=}&M_{[x_s\colon s\in S]}(x_s)_{s\in S}+M_{[y_s\colon s\in S]}(y_s)_{s\in S}\\
& = & M(x_s)_{s\in S} + M(x_s)_{s\in S}\end{array}$$
and so the proof is complete.\end{proof}

\end{document}